\newtheorem{theorem}{Theorem}[section]
\newtheorem{definition}{Definition}[section]
\newtheorem{ex}{Example}[section]
\newtheorem{lemma}{Lemma}[section]
\numberwithin{table}{section}
\begin{document}

\title{A Proof of Euler's Theorem via Quandles}

\author{Ant\'onio Lages and Pedro Lopes\\
Center for Mathematical Analysis, Geometry, and Dynamical Systems, \\
Department of Mathematics, \\
Instituto Superior T\'{e}cnico, Universidade de Lisboa\\
1049-001 Lisbon, Portugal \\
\texttt{antonio.lages@tecnico.ulisboa.pt, pelopes@tecnico.ulisboa.pt}}
\maketitle
\bigbreak
\begin{abstract}
We prove Euler's theorem of number theory developing an argument based on quandles. A quandle is an algebraic structure whose axioms mimic the three Reidemeister moves of knot theory.
\end{abstract}

Keywords: Euler's theorem;  quandles; permutations

Mathematics Subject Classification 2020:  11A99; 20N99

\section{Introduction}\label{sec:intro}

In this article we present a different proof of Euler's theorem whose statement is as follows.

\begin{theorem}[Euler]\label{thm:Euler}
Let $n$ and $m$ be coprime positive integers. Then \[m\quad\mid\quad\big(n^{\varphi(m)}-1\big).\]
\end{theorem}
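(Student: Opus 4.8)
The plan is to realise ``multiplication by $n$ modulo $m$'' as a right translation of a suitable finite quandle and then read off the divisibility from the cycle structure of that permutation. Since $\gcd(n,m)=1$, the residue $n$ is a unit in $\mathbb{Z}/m\mathbb{Z}$, so one can form the Alexander quandle $Q=(\mathbb{Z}/m\mathbb{Z},*)$ with $a*b:=na+(1-n)b\pmod m$. The first step is to verify that this really is a quandle: $a*a=na+(1-n)a=a$ gives idempotency (reflecting Reidemeister I); each right translation $R_b\colon a\mapsto na+(1-n)b$ is a bijection, with inverse $c\mapsto n^{-1}\bigl(c-(1-n)b\bigr)$, because $n$ is invertible (reflecting Reidemeister II); and right self-distributivity $(a*b)*c=(a*c)*(b*c)$ is immediate from the linearity of $a*b$ in $a$ (reflecting Reidemeister III). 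The feature I will use is simply that every $R_b$ is a permutation of the underlying set.

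The second step singles out the arithmetically meaningful translation. For $b=0$ we get $R_0(a)=na$, and by induction $R_0^{\,k}(a)=n^{k}a$; hence $R_0^{\,k}=\mathrm{id}$ iff $n^{k}\equiv 1\pmod m$, so $R_0$ is a permutation of $\mathbb{Z}/m\mathbb{Z}$ of order exactly $r:=\operatorname{ord}_m(n)$, the multiplicative order of $n$ modulo $m$. (Incidentally the translations $\tau_d\colon a\mapsto a+d$ are quandle automorphisms of $Q$ and conjugate $R_0$ to every $R_b$, so $r$ is an invariant of $Q$; this is a pleasant remark but not needed.)

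The third and decisive step is to restrict $R_0$ to the set $U\subseteq\mathbb{Z}/m\mathbb{Z}$ of units, which has $\varphi(m)$ elements and is $R_0$-invariant since $nu$ is a unit whenever $u$ is. On $U$ the cyclic group $\langle R_0\rangle$ acts \emph{freely}: if $R_0^{\,k}(u)=u$ with $u$ a unit, then $n^{k}u\equiv u$ and cancelling $u$ gives $n^{k}\equiv 1$, i.e.\ $r\mid k$. Therefore the orbit of each $u\in U$ is precisely $\{u,nu,\dots,n^{r-1}u\}$ and has exactly $r$ elements, and since the orbits partition $U$ we conclude $r\mid|U|=\varphi(m)$.

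Finally, writing $\varphi(m)=rs$ gives $n^{\varphi(m)}=(n^{r})^{s}\equiv 1\pmod m$, which is exactly $m\mid n^{\varphi(m)}-1$. Almost all of the work is the routine quandle-axiom check and the bookkeeping with cycle lengths; the one place where care is essential — and where a careless appeal to Lagrange's theorem would make the argument circular — is the freeness of the $\langle R_0\rangle$-action on $U$ in the third step, which is the hinge connecting the quandle picture to the arithmetic.
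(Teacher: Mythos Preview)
Your argument is correct, but it is not the paper's proof; in fact it is essentially the classical group-of-units proof with a quandle veneer. Once you specialise to $R_0(a)=na$ and restrict to the set $U$ of units, none of the quandle axioms (idempotency, self-distributivity) play any role: you are simply observing that left multiplication by $n$ permutes $U$ freely, so the orbits have common size $r=\operatorname{ord}_m(n)$ and hence $r\mid|U|=\varphi(m)$. That is exactly the coset/Lagrange argument the paper lists as proof~1 in Section~\ref{sec:final}, and it relies on the interpretation of $\varphi(m)$ as the number of units in $\mathbb{Z}/m\mathbb{Z}$ --- precisely the input the paper sets out to avoid. (Your remark that ``an appeal to Lagrange would be circular'' is puzzling: there is no circularity, and your freeness step \emph{is} the Lagrange argument for a cyclic subgroup.)

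The paper takes a genuinely different route. For each prime power $p^k$ it works not in $\mathbb{Z}/m\mathbb{Z}$ but in the much larger Alexander quandle on $\mathbb{Z}/(n^{p^k}-1)\mathbb{Z}$, where $R_b^{\,p^k}=\mathrm{id}$ forces every cycle length of $R_b$ to be a power of $p$. An inclusion--exclusion count (Lemma~\ref{lem:propersols}) shows that exactly $n^{p^k}-n^{p^{k-1}}$ elements lie in cycles of length $p^k$, whence $p^k\mid n^{p^{k-1}}(n^{p^k-p^{k-1}}-1)$ and so $p^k\mid(n^{\varphi(p^k)}-1)$; the general case then follows by $(m^r-1)\mid(m^{rs}-1)$ and the pairwise coprimality of the $p_j^{k_j}$. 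The payoff is that the value $\varphi(p^k)=p^k-p^{k-1}$ emerges from the cycle count rather than from knowing how many units $\mathbb{Z}/p^k\mathbb{Z}$ has. Your approach is shorter and perfectly valid, but it does not deliver that feature, which is the paper's stated novelty.
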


\bigbreak

In the statement of Theorem \ref{thm:Euler} above, $\varphi$ denotes Euler's totient function, i.e., the number of positive integers less than the argument that are coprime to the argument. Our proof is based on quandles. A quandle (\cite{Joyce}, \cite{Matveev}) is an algebraic structure whose axioms mimic the three Reidemeister moves in knot theory (\cite{lhKauffman}) and thus provides interesting invariants for knots (\cite{DL}). The three axioms of quandles (Section \ref{sec:quandles-euler}) are (i) idempotency, (ii) right-invertibility, (iii) self-distributivity. Right-invertibility allows us to conceive of each quandle as a sequence of permutations. Since  permutations can be broken down into  disjoint cycles, we  organize the study of quandles around the lengths these cycles may have (\cite{Lopes_Roseman}, \cite{Lages_Lopes}, \cite{Lages_Lopes_Vojtechovsky}). In the course of these considerations we arrive at a proof of Euler's Theorem. Specifically, in this article, we look at a particular class of quandles and for each one of them we consider its permutations. Then we calculate the length of  the disjoint cycles of these permutations along with the number of elements that make them up. Equating the length of these cycles with the number of elements comprised by these cycles, we prove Euler's Theorem for only one prime (Theorem \ref{thm:flt+}). Bringing together the contributions of the individual primes in the general case, we prove the full Euler's Theorem (Theorem \ref{thm:Euler}).

The organization of the current article is as follows. The background material is laid out in Section \ref{sec:quandles-euler}, as well as the proofs of the theorems. In Section \ref{sec:extra} we collect information about the quandle dealt with in Section \ref{sec:quandles-euler} to state some facts about its structure. In Section \ref{sec:final} we compare our proof with other algebraic proofs from the literature.

\section{Quandles and  proof of Euler's Theorem}\label{sec:quandles-euler}

\begin{definition}\label{def:quandle}
A {\it quandle}, $(X,\ast)$, is a set, $X$, equipped with a binary operation, $\ast$, such that, for any $a, b, c \in X$
\begin{enumerate}[(i)]
\item $a\ast a = a$ \quad -- \quad idempotency;

\item there exists a unique $x\in X$ such that $x\ast b = a$ \quad -- \quad right-invertibility;

\item $(a\ast b) \ast c = (a\ast c) \ast (b \ast c)$ \quad -- \quad self-distributivity.
\end{enumerate}
\end{definition}

Here are some examples of quandles.

\begin{ex}\label{ex:quandles}{\rm
\begin{enumerate}[1.]
\item Let $G$ be a group and consider the conjugation in the group, $a\ast b:=bab^{-1}$. Then $(G, \ast )$ is a quandle.
\item Let $n, m$ be  positive integers with $m$ invertible modulo $n$. Consider $a\ast b = ma + (1-m)b$ (mod $n$). Then $(\mathbf{Z}/n\mathbf{Z}, \ast)$ is a quandle. These quandles are called {\it linear Alexander quandles}. In the sequel, we call $m$ the {\rm parameter} of these quandles.
\end{enumerate}}
\end{ex}

\begin{definition}\label{def:rightTranslation}
For a quandle $(X,*)$ and for $y\in X$, we let $R_y: X\rightarrow X, x\mapsto x*y$ be the {\it right translation} by $y$ in $(X,*)$. Axiom (ii) in Definition \ref{def:quandle} guarantees that right translations are permutations of the elements of the quandle under study.
\end{definition}

We now prove a particular case of Euler's Theorem in which there is only one prime at stake.

\begin{theorem}\label{thm:flt+}
Let $p$ be a prime. Let $n, k$ be  positive integers with $n$ coprime with $p$. Then, $$p^k \mid (n^{p^k-p^{k-1}}-1) .$$
\end{theorem}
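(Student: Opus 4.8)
The plan is to read $n^{p^k-p^{k-1}}-1$ off the cycle structure of a single right translation in a suitable linear Alexander quandle. Since $\gcd(n,p)=1$, the residue of $n$ is invertible modulo $p^k$, so by Example~\ref{ex:quandles}(2) we may form the linear Alexander quandle $Q:=(\mathbf{Z}/p^k\mathbf{Z},\ast)$ with parameter $n$, i.e.\ $a\ast b=na+(1-n)b\pmod{p^k}$. By Definition~\ref{def:rightTranslation} every right translation of $Q$ is a permutation; in particular so is $R_0\colon x\mapsto nx$, which is simply multiplication by $n$ on $\mathbf{Z}/p^k\mathbf{Z}$, so that $R_0^{\,j}$ is multiplication by $n^{j}$. (The general right translation $R_y$ is conjugate to $R_0$ via $x\mapsto x+y$, hence has the same cycle structure, but we shall only need $R_0$.)

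Next I would restrict attention to the subset $U\subseteq\mathbf{Z}/p^k\mathbf{Z}$ of residues coprime to $p$. Its size is $p^k-p^{k-1}$, since among the $p^k$ residues modulo $p^k$ exactly the $p^{k-1}$ multiples of $p$ are excluded. Because $n$ is coprime to $p$, the permutation $R_0$ carries $U$ into $U$, hence restricts to a permutation $\rho$ of $U$. (Note $U$ need not be a subquandle of $Q$; we only use that $R_0$ permutes it.) I would then compute the lengths of the disjoint cycles of $\rho$: the cycle through $u\in U$ is $\{u,\,nu,\,n^{2}u,\dots\}$, and its length is the least $j\ge 1$ with $n^{j}u\equiv u\pmod{p^k}$; since $u$ is invertible this is the least $j\ge 1$ with $n^{j}\equiv 1\pmod{p^k}$ — a number $d$ that does not depend on $u$ (and exists, e.g.\ because the cycle through $1$ is finite).

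Therefore $\rho$ is a product of disjoint cycles all of the same length $d$, so $U$ is partitioned into $|U|/d$ of them; equating the total number of elements of $U$ with the number of cycles times their common length yields $d\mid|U|=p^k-p^{k-1}$. Since $n^{d}\equiv1\pmod{p^k}$, raising to the power $(p^k-p^{k-1})/d$ gives $n^{p^k-p^{k-1}}\equiv1\pmod{p^k}$, i.e.\ $p^k\mid\big(n^{p^k-p^{k-1}}-1\big)$, which is the claim.

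The one point that needs care is the reduction to $U$ in the second paragraph: on the whole quandle $\mathbf{Z}/p^k\mathbf{Z}$ the cycles of $R_0$ have \emph{several} different lengths (depending on the $p$-adic valuation of the element), and it is only after discarding the non-units that the cycle lengths become uniform and the counting argument applies. Once that observation is in place, the rest is just the elementary fact that a permutation all of whose cycles have length $d$ lives on a set whose cardinality is a multiple of $d$, together with the count $|U|=p^k-p^{k-1}$.
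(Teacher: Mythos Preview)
Your argument is correct, but it follows a different route from the paper. You work in the linear Alexander quandle on $\mathbf{Z}/p^k\mathbf{Z}$ with parameter $n$, observe that $R_0$ is multiplication by $n$, restrict to the set $U$ of units, and use that all $R_0$-cycles on $U$ have the same length $d$ (the multiplicative order of $n$ modulo $p^k$), whence $d\mid |U|=p^k-p^{k-1}$. This is essentially the classical proof via Lagrange's theorem in the group of units $(\mathbf{Z}/p^k\mathbf{Z})^{\times}$ --- precisely the argument the paper records and sets itself apart from in Section~\ref{sec:final}, item~1 --- with the quandle language largely cosmetic: you yourself note that $U$ need not be a subquandle, and the quandle operation enters only through the single fact that $R_0$ is a permutation.

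The paper instead works in the much larger quandle on $\mathbf{Z}/(n^{p^k}-1)\mathbf{Z}$ with parameter $n$. There the cycle lengths of $R_b$ are not uniform but lie among $1,p,p^2,\dots,p^k$, since $R_b^{\,p^k}$ is the identity (Lemma~\ref{lem:iterperm} with $l=p^k$). An inductive count (Lemma~\ref{lem:propersols}) shows that exactly $n^{p^i}-n^{p^{i-1}}$ elements sit in cycles of length $p^i$; in particular $n^{p^k}-n^{p^{k-1}}$ elements sit in $p^k$-cycles, so $p^k\mid n^{p^{k-1}}\bigl(n^{p^k-p^{k-1}}-1\bigr)$, and coprimality of $n$ and $p$ finishes. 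Your route is shorter and needs only the units; the paper's route never singles out the units, yields the full cycle profile of the quandle (reused in Theorem~\ref{thm:linearAlexqdle}), and makes the divisibility emerge from a quandle-theoretic count rather than from the group structure of $(\mathbf{Z}/p^k\mathbf{Z})^{\times}$.
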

\begin{proof}
Since $n\cdot n^{p^k-1} \equiv  n^{p^k} \equiv 1$ (mod $n^{p^k}-1$), it follows that $n$ is invertible modulo $n^{p^k}-1$.
Consider, then, the linear Alexander quandle of order $n^{p^k}-1$ and parameter $n$ (cf. Example \ref{ex:quandles}, $2.$). The underlying set is $\mathbf{Z}/(n^{p^k}-1)\mathbf{Z}$ and, for any $a, b \in \mathbf{Z}$, $a\ast b = na+(1-n)b$ (mod $n^{p^k}-1$).
We will need the following lemma.

\begin{lemma}\label{lem:iterperm}
Let $a, b \in \mathbf{Z}$. Then, for any $l\in \mathbf{Z}^+$, $$R_b^{\,\,\, l}(a):=(\cdots ((a\ast \underbrace{b)\ast b) \cdots )\ast b}_{l}=n^{l}a - (n^{l}-1)b  .$$
\end{lemma}

\begin{proof}
This is a straightforward exercise in induction that we leave to the reader.
\end{proof}

Resuming the proof of Theorem \ref{thm:flt+}, setting $l=p^k$, we obtain $$R_b^{\,\,\, p^k}(a)= n^{p^k}a - (n^{p^k}-1)b \equiv a \qquad \text{ (mod } n^{p^k}-1) .$$
We note that,  modulo $n^{p^k}-1$,  the sequence $$(R_b^{\,\,\, l}(a)\, :\, l\in \mathbf{Z}_0^+)$$ is a periodic sequence starting at $a$ and consisting of copies of the cycle of $R_b$ that contains $a$. Then, $R_b^{\,\,\, p^k}(a) \equiv a$  implies that the length of the disjoint cycle in $R_b$ containing $a$, divides  $p^k$. The possibilities for these lengths are then $1, p, p^2, \dots , p^i, \dots , p^k$. We need to count the $a$'s that satisfy $R_b^{\,\,\, p^i}(a) \equiv a$, for the different $i$'s. We call an $a$ that satisfies $R_b^{\,\,\, p^i}(x) \equiv x$ but does not satisfy $R_b^{\,\,\, p^j}(x) \equiv x$, for $j<i$, a {\it proper solution} of $R_b^{\,\,\, p^i}(x) \equiv x$. We will need the following lemma.

\begin{lemma}\label{lem:propersols}
The number of proper solutions, modulo $n^{p^k}-1$, of $R_b^{\,\,\, p^i}(x) \equiv x$ is $n^{p^i}-n^{p^{i-1}}$, for $1\leq i \leq k$, and $n-1$ for $i=0$.
\end{lemma}
\begin{proof}
We use induction on $i$. For $i=0$, we are looking for the solutions of $x \equiv R_b^{\,\,\, 1}(x) = nx-(n-1)b$, i.e., $(n-1)(x-b)\equiv0$ (mod $n^{p^k}-1$). Now, $$n^{p^k}-1 = (n-1)(n^{p^k-1}+\dots + n + 1)$$ so the proper solutions, modulo $n^{p^k}-1$, are:
\begin{align*}
&x_0\equiv b\\
&x_1\equiv b+(n^{p^k-1}+n^{p^k-2}+ \cdots + n^2 + n + 1)\\
&x_2\equiv b+2(n^{p^k-1}+n^{p^k-2}+ \cdots + n^2 + n + 1)\\
& \vdots \\
&x_{n-2}\equiv b+(n-2)(n^{p^k-1}+n^{p^k-2}+ \cdots + n^2 + n + 1).
\end{align*}
There are then $n-1$ proper solutions for $i=0$.

For $i=1$, we are looking for the solutions of  $x \equiv R_b^{\,\,\, p}(x) = n^px-(n^p-1)b$, i.e., $(n^p-1)(x-b)\equiv 0$ (mod $n^{p^k}-1$). Now, $$n^{p^k}-1 = (n^p-1)((n^p)^{p^{k-1}-1}+\dots + n^p + 1)$$ so the solutions are, modulo $n^{p^k}-1$:
\begin{align*}
&x_0\equiv b\\
&x_1\equiv b+((n^p)^{p^{k-1}-1}+\dots + n^p + 1)\\
&x_2\equiv b+2((n^p)^{p^{k-1}-1}+\dots + n^p + 1)\\
& \vdots \\
&x_{n^p-2}\equiv b+(n^p-2)((n^p)^{p^{k-1}-1}+\dots + n^p + 1).
\end{align*}
There are then $n^p-1$ solutions for $i=1$. In order to obtain the number of proper solutions of $x \equiv  R_b^{\,\,\, p}(x)$ we subtract from $n^p-1$ the number of solutions of $x \equiv  R_b^{\,\,\, 1}(x)$. There are, then, $n^p-1-(n-1) = n^p -n$ proper solutions of $x \equiv  R_b^{\,\,\, p}(x)$.

We now assume that, modulo $n^{p^k}-1$, there is an integer $i\geq 1$ such that the number of proper solutions of $R_b^{\,\,\, p^j}(x) \equiv x$, for each $1\leq j \leq i-1$, is $n^{p^j}-n^{p^{j-1}}$, and $n-1$ for $j=0$. Modulo $n^{p^k}-1$, the equation  $R_b^{\,\,\, p^{i}}(x) \equiv  x$ is equivalent to $0\equiv (n^{p^{i}}-1)(x-b)$. Furthermore, $$n^{p^k}-1 = \big(n^{p^{i}}\big)^{p^{k-i}}-1=\big(n^{p^{i}}-1\big)\big(\big(n^{p^{i}}\big)^{p^{k-i}-1}+\big(n^{p^{i}}\big)^{p^{k-i}-2} + \dots + n^{p^{i}} + 1\big) .$$ Then, the solutions, modulo $n^{p^k}-1$, are: $$x_j\equiv b+j\big(\big(n^{p^{i}}\big)^{p^{k-i}-1}+\big(n^{p^{i}}\big)^{p^{k-i}-2} + \dots + n^{p^{i}} + 1\big) \qquad \text{ for }\quad 0\leq j \leq n^{p^{i}}-2 .$$ There are, then, $n^{p^i}-1$ such solutions. In order to obtain the number of proper solutions, we subtract from $n^{p^i}-1$, the number of proper solutions of the equations of the sort $R_b^{\,\,\, p^{j}}(x)\equiv x$, modulo $n^{p^k}-1$, for $0\leq j \leq i-1$: $$n^{p^{i}}-1 - \sum_{j=1}^{i-1}\big( n^{p^j}-n^{p^{j-1}} \big) - (n-1) = n^{p^{i}} - n^{p^{i-1}} .$$ This completes the proof of Lemma \ref{lem:propersols}.
\end{proof}

Resuming the proof of Theorem \ref{thm:flt+}, the number of proper solutions of $R_b^{\,\,\, p^k}(x) \equiv  x$, modulo $n^{p^k}-1$,  is $n^{p^{k}} - n^{p^{k-1}}$. But the set of these proper solutions coincides with the union of all disjoint cycles of length $p^k$ of $R_b$. Therefore, $$p^k \mid \big( n^{p^{k}} - n^{p^{k-1}}\big) .$$ Since $n^{p^{k}} - n^{p^{k-1}}=n^{p^{k-1}}\big( n^{p^k-p^{k-1}} -1 \big)$ and $n$ and $p$ are coprime, it follows that $$p^k \mid \big( n^{p^k-p^{k-1}} -1 \big) .$$ This completes the proof of Theorem \ref{thm:flt+}.
\end{proof}

We are now ready to prove Euler's Theorem. We rewrite the statement of Euler's Theorem here as Theorem \ref{thm:euler} below. In the statement of Theorem \ref{thm:euler} we emphasize the role of the individual primes in order to bring out the connection with Theorem \ref{thm:flt+}.

\begin{theorem}[Euler]\label{thm:euler}
Let $p_1, p_2, \dots , p_N$ be a sequence of distinct primes, $k_1, k_2, \dots , k_N$ a sequence of positive integers, and $n$ a positive integer coprime with each of the primes. Then $$\prod_{j=1}^N p_j^{k_j} \quad\mid \quad  \Bigg( n^{\,\,\displaystyle{\prod_{j=1}^N \big(p_j^{k_j}-p_j^{k_j-1}\big)}} -1\Bigg) .$$
\end{theorem}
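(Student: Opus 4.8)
The plan is to bootstrap Theorem \ref{thm:euler} from Theorem \ref{thm:flt+} using the Chinese Remainder Theorem, exactly as one lifts Fermat's little theorem to Euler's theorem in the classical argument. First I would observe that it suffices to prove, for each fixed index $j$, that
\[
p_j^{k_j} \quad\mid\quad \Bigg( n^{\,\,\displaystyle{\prod_{i=1}^N \big(p_i^{k_i}-p_i^{k_i-1}\big)}} - 1\Bigg),
\]
since the moduli $p_1^{k_1}, \dots, p_N^{k_N}$ are pairwise coprime, so that a common divisibility by each of them is equivalent to divisibility by their product $\prod_{j=1}^N p_j^{k_j}$. This is the one place the hypothesis that the $p_j$ are \emph{distinct} primes is used.

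Next, fix $j$ and write the exponent as
\[
\prod_{i=1}^N \big(p_i^{k_i}-p_i^{k_i-1}\big) = \big(p_j^{k_j}-p_j^{k_j-1}\big)\cdot M_j,
\qquad M_j := \prod_{\substack{i=1\\ i\neq j}}^N \big(p_i^{k_i}-p_i^{k_i-1}\big),
\]
where $M_j$ is a positive integer. By Theorem \ref{thm:flt+} applied to the prime $p_j$, the integer $k_j$, and the integer $n$ (which is coprime with $p_j$ by hypothesis), we have
\[
p_j^{k_j} \quad\mid\quad \big( n^{p_j^{k_j}-p_j^{k_j-1}} - 1\big).
\]
Setting $t := n^{p_j^{k_j}-p_j^{k_j-1}}$, this says $t \equiv 1 \pmod{p_j^{k_j}}$, hence $t^{M_j} \equiv 1 \pmod{p_j^{k_j}}$; but $t^{M_j} = n^{(p_j^{k_j}-p_j^{k_j-1})M_j} = n^{\prod_{i=1}^N (p_i^{k_i}-p_i^{k_i-1})}$, which gives the desired divisibility by $p_j^{k_j}$.

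Combining the two steps finishes the proof. There is no serious obstacle here: the content is entirely in Theorem \ref{thm:flt+}, and the passage to the general statement is the routine CRT-plus-raising-to-a-power maneuver. The only points demanding a line of care are (a) checking that each exponent $p_i^{k_i}-p_i^{k_i-1}$, and hence $M_j$, is a positive integer so that the manipulations make sense, and (b) invoking the distinctness of the primes to justify that simultaneous divisibility by each $p_j^{k_j}$ yields divisibility by the product.
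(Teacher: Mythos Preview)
Your proposal is correct and follows essentially the same route as the paper: apply Theorem~\ref{thm:flt+} to each prime power $p_j^{k_j}$, lift the divisibility $p_j^{k_j}\mid\big(n^{p_j^{k_j}-p_j^{k_j-1}}-1\big)$ to the full exponent (you do this via $t\equiv 1\Rightarrow t^{M_j}\equiv 1$; the paper uses the equivalent factorization $(m^r-1)\mid(m^{rs}-1)$), and then combine using pairwise coprimality of the $p_j^{k_j}$. The only cosmetic difference is that you phrase the last step as ``CRT,'' whereas the paper simply says ``since any two of the primes are distinct''; the content is identical.
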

\begin{proof}
Let $1\leq i \leq N$. By Theorem \ref{thm:flt+} we have $$p^{k_i} \,\mid\, \big( n^{p^{k_i}-p^{k_i-1}} -1 \big) .$$
Since $$m^{rs}-1 = (m^r)^s-1 = (m^r-1)\big((m^r)^{s-1}+ \cdots + m^r + 1\big) ,$$ it follows that $$(m^r-1) \,\mid \, (m^{rs}-1) ,$$ and so $$p^{k_i} \,\mid\, \big( n^{p^{k_i}-p^{k_i-1}} -1 \big)  \,\mid\, \Bigg( n^{\,\,\displaystyle{\prod_{j=1}^N \big(p_j^{k_j}-p_j^{k_j-1}\big)}} -1\Bigg) .$$ Moreover, since any two of the primes are distinct, $$\prod_{j=1}^N p_j^{k_j} \quad\Huge\mid\quad \Bigg( n^{\,\,\displaystyle{\prod_{j=1}^N \big(p_j^{k_j}-p_j^{k_j-1}\big)}} -1\Bigg) .$$

This completes the proof of Euler's Theorem.
\end{proof}

\section{Extra information}\label{sec:extra}

In this section we gather some  results we obtained so far (and come up with another one) in order to prove two  facts about the linear Alexander quandle we have been dealing with. We need the following Definitions.

\begin{definition}
The list of lengths of the disjoint cycles that make up a permutation is called the {\it pattern} of the permutation. The list of patterns of the permutations of a quandle is called the  {\it profile} of the quandle. If the profile, as a set, is a singleton, we identify the only pattern with the profile.
\end{definition}

\begin{definition}
A quandle $(X, \ast)$ is {\it connected} if for any $i, j\in X$ there are finitely many $k_1, \dots , k_l\in X$ such that $$j = ( \cdots ((i\ast k_1)\ast k_2 )\ast \cdots ) \ast k_l .$$
\end{definition}

\begin{theorem}\label{thm:linearAlexqdle}
Let $p$ be a prime. Let $n, k$ be positive integers with $n$ coprime with $p$.

Consider the linear Alexander quandle of order $n^{p^k}-1$ and parameter $n$. Then,
\begin{enumerate}[(i)]
\item the profile of this quandle is: $$\{ \underbrace{1, 1,  \dots , 1}_{\displaystyle{n-1}}, \underbrace{p, p,  \dots , p}_{\displaystyle{\frac{n^p-n}{p}}}, \dots , \underbrace{p^i, p^i,  \dots , p^i}_{\displaystyle{\frac{n^{p^i}-n^{p^{i-1}}}{p^i}}}, \dots , \underbrace{p^k, p^k,  \dots , p^k}_{\displaystyle{\frac{n^{p^k}-n^{p^{k-1}}}{p^k}}} \}$$

\item this quandle is not connected.
\end{enumerate}
\end{theorem}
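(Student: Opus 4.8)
The plan is to prove both parts by assembling the counting information already established in the proof of Theorem \ref{thm:flt+}. For part (i), recall that every right translation $R_b$ of the linear Alexander quandle of order $n^{p^k}-1$ and parameter $n$ satisfies $R_b^{\,\,\,p^k}(a)\equiv a$ for all $a$, so each of its disjoint cycles has length dividing $p^k$, i.e. length $p^i$ for some $0\le i\le k$. By Lemma \ref{lem:propersols}, the number of proper solutions of $R_b^{\,\,\,p^i}(x)\equiv x$ is $n-1$ when $i=0$ and $n^{p^i}-n^{p^{i-1}}$ when $1\le i\le k$; these proper solutions are exactly the elements lying on cycles of length precisely $p^i$. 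Since a cycle of length $p^i$ accounts for $p^i$ such elements, the number of length-$p^i$ cycles of $R_b$ is $(n-1)$ for $i=0$ (these are the fixed points) and $\frac{n^{p^i}-n^{p^{i-1}}}{p^i}$ for $1\le i\le k$, which is an integer by Theorem \ref{thm:flt+}. As a sanity check one can verify $\sum$ of the lengths equals $n^{p^k}-1$, the order of the quandle, using the telescoping identity $\,(n-1)+\sum_{i=1}^k(n^{p^i}-n^{p^{i-1}})=n^{p^k}-1$. This pattern is manifestly independent of the choice of $b$, so the profile, as a set, is the singleton displayed in the statement.

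For part (ii), I would argue by a counting/parity obstruction rather than by direct manipulation of the orbit relation. A general fact about quandles is that the orbits of the action generated by all right translations partition $X$, and a quandle is connected precisely when there is a single orbit. Here it suffices to exhibit that $X=\mathbf{Z}/(n^{p^k}-1)\mathbf{Z}$ breaks into more than one orbit. The cleanest route is to observe that for the linear Alexander quandle with parameter $n$, the map $x\mapsto x\ast y = nx+(1-n)y$ preserves the coset of $x$ modulo $\gcd(n-1,\,n^{p^k}-1)=n-1$: indeed $x\ast y - x=(n-1)(y-x)\equiv 0 \pmod{n-1}$, so every generator of the orbit action fixes the residue of $x$ modulo $n-1$. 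Hence the $n-1$ residue classes modulo $n-1$ are unions of orbits, and since $n-1\ge 2$ (as $n$ is coprime to $p$, in particular $n\ge 2$), there is more than one orbit, so the quandle is not connected. I should add the edge-case remark that $n=1$ is excluded because then $n^{p^k}-1=0$ and there is no quandle of that order; with $n\ge 2$ we genuinely have $n-1\ge 1$, and we need $n-1\ge 2$ for the conclusion — so I would instead note that even $n-1\ge 1$ suffices once we observe that the fixed points counted in (i), of which there are $n-1\ge 1$, are each their own orbit while the quandle has order $n^{p^k}-1>n-1$, forcing at least two orbits.

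I expect the main obstacle to be part (ii): being careful about which group-theoretic/quandle-theoretic fact about orbits and connectedness I am entitled to invoke, given that the paper's \emph{Definition} of connected is phrased directly in terms of iterated right translations. To stay self-contained I would phrase the argument purely in those terms: take $i=0$ and $j=1$ (assuming $n^{p^k}-1\ge 2$, which holds since $n\ge 2$ forces $n^{p^k}-1\ge 3$), and show no finite sequence $k_1,\dots,k_l$ yields $1=(\cdots((0\ast k_1)\ast k_2)\ast\cdots)\ast k_l$, because by Lemma \ref{lem:iterperm}-style reasoning each single step changes the value only by a multiple of $n-1$, so the reachable set from $0$ lies in $(n-1)\mathbf{Z}/(n^{p^k}-1)\mathbf{Z}$, which does not contain $1$ when $n\ge 3$; for $n=2$ one instead picks $i=0$, $j$ any element not on the same $R_b$-orbit, using that (i) exhibits more than one fixed point of some $R_b$ together with at least one nontrivial cycle. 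I would streamline this in the write-up to the single observation that $a\ast b\equiv a\pmod{n-1}$ and then split on whether $n-1>1$ or $n-1=1$, handling $n=2$ by the multiple-orbit count from part (i).
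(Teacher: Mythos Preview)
Your argument for (i) is exactly the paper's: both read off the possible cycle lengths $1,p,\dots,p^k$ from $R_b^{\,p^k}=\mathrm{id}$ and then obtain the multiplicities by dividing the proper-solution counts of Lemma~\ref{lem:propersols} by $p^i$.

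For (ii) your main line is also the paper's, lightly repackaged. The paper proves a dedicated Lemma~\ref{lem:extra} computing $(\cdots((0\ast b_1)\ast b_2)\ast\cdots)\ast b_l=-(n-1)(n^{l-1}b_1+\cdots+b_l)$ by induction and concludes that $1$ is unreachable from $0$. You short-circuit the induction with the single-step congruence $x\ast y\equiv x\pmod{n-1}$ (minor sign slip: $x\ast y-x=(n-1)(x-y)$, not $(n-1)(y-x)$, though this is harmless). Either way the idea is the same: the residue class modulo $n-1$ is preserved by every right translation, so $0$ and $1$ lie in different orbits once $n-1>1$.

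Your worry about $n=2$ is well placed, but your fallback for that case is wrong: a fixed point of a \emph{single} $R_b$ is not ``its own orbit'' under the action generated by \emph{all} right translations (idempotency only gives $R_b(b)=b$, not $R_{b'}(b)=b$), so counting fixed points from (i) says nothing about connectedness. In fact for $n=2$ the quandle $a\ast b=2a-b$ on $\mathbf{Z}/(2^{p^k}-1)\mathbf{Z}$ is connected, since $0\ast(-c)=c$ reaches every element in one step. The paper's proof via Lemma~\ref{lem:extra} has the same blind spot (when $n-1=1$ the lemma gives no obstruction), so part (ii) as stated tacitly requires $n\ge 3$; you were right to flag the case, just not in the way you tried to resolve it.
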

\begin{proof}
\begin{enumerate}[(i)]
\item In the course of the proof of Theorem \ref{thm:flt+} (between Lemmas \ref{lem:iterperm} and \ref{lem:propersols}) we establish that the possible lengths of any disjoint cycle of any permutation of the quandle at issue are $$1, p, p^2, \dots , p^i, \dots, p^k .$$ In Lemma \ref{lem:propersols} we calculate the number of elements of the quandle that constitute all cycles for each of these possibilities: cycles of length $1$: $n-1$ elements; cycles of length $p$: $n^p-n$ elements; \dots    cycles of length $p^i$: $n^{p^i}-n^{p^{i-1}}$ elements; \dots   cycles of length $p^k$: $n^{p^k}-n^{p^{k-1}}$ elements. So there are $n-1$ cycles of length $1$; $\frac{n^p-n}{p}$ cycles of length $p$; \dots  $\frac{n^{p^i}-n^{p^{i-1}}}{p^i}$ cycles of length $p^i$; \dots $\frac{n^{p^k}-n^{p^{k-1}}}{p^k}$ cycles of length $p^k$.

\item We will show that successive quandle multiplication of $0$ by other elements will only give rise to multiples of $n-1$.
\begin{lemma}\label{lem:extra}
Let $(b_i)$ be a sequence of integers. Then, modulo $n^{p^k}-1$, we have $$(\cdots ((0\ast b_1)\ast b_2)\ast \cdots )\ast b_l = -(n-1)(n^{l-1}b_1 + n^{l-2}b_2 + \dots + nb_{l-1}+b_l) .$$
\end{lemma}
\begin{proof}
This is a straightforward exercise in induction that we leave to the reader.
\end{proof}
According to Lemma \ref{lem:extra}, there are no $b_1, \dots , b_l \in \mathbf{Z}$ such that $1= (\cdots ((0\ast b_1)\ast b_2 \dots )\ast b_l$ (mod $n^{p^k}-1$). This completes the proof of Theorem \ref{thm:linearAlexqdle}.
\end{enumerate}
\end{proof}

\section{Final remarks}\label{sec:final}

We would be remiss if we did not elaborate further about the totient function. As is known, the value of the totient function at $\prod_{j=1}^N  p_j^{k_j}$ is  $\prod_{j=1}^N \big(p_j^{k_j}-p_j^{k_j-1}\big)$, its meaning being the number of coprimes with  $\prod_{j=1}^N  p_j^{k_j}$, in the range between $1$ and $\prod_{j=1}^N  p_j^{k_j} -1$. But in the course of our proofs, we do not need the interpretation of the totient function. This is why the totient function has not been much mentioned this far.

\bigbreak

To the best of our knowledge, the algebraic proofs of Euler's Theorem found in the literature seem to be the following two (we let $\varphi$ stand for the totient function).

\begin{enumerate}[1.]

\item (\cite {Ireland_Rosen}, \cite{Jacobson}) Given a positive integer $r$, $\mathbf{Z}/r\mathbf{Z}$ is understood as a ring whose group of units has $\varphi(r)$ elements. Then using a corollary of Lagrange's theorem, any of these units (i.e., $[a]\in \mathbf{Z}/r\mathbf{Z}$ such that $a$ is coprime with $r$) satisfies $[a]^{\varphi(r)} = [1]$, modulo $r$. Thus, $r \mid (a^{\varphi(r)}-1)$.

\item (\cite{HardyWright}, \cite{Landau}) Given a positive integer $r$, $\mathbf{Z}/r\mathbf{Z}$ is understood as a ring whose group of units has $\varphi(r)$ elements. These $\varphi(r)$ elements have representatives in reduced form, as follows: $$1 \leq r_1 < r_2 < \dots < r_{\varphi(r)} < r .$$

    Upon multiplication by any
     unit $[a]\in \mathbf{Z}/r\mathbf{Z}$ (i.e., such that $a$ is coprime with $r$), we obtain: $$[a r_1] , [a r_2] , \dots , [a r_{\varphi(r)}]  .$$ Now, $$[a r_i] = [a r_j] \Longleftrightarrow [a] [r_i] = [a] [r_j] \Longleftrightarrow  [r_i] =  [r_j] \Longleftrightarrow  r_i = r_j .$$ Thus, $$\{[a r_1] , [a r_2] , \dots , [a r_{\varphi(r)}]\} = \{[r_1] , [r_2] , \dots , [r_{\varphi(r)}]\}  ,$$ so that $$\prod_{j=1}^{\varphi(r)} [r_j] = \prod_{j=1}^{\varphi(r)} [a r_j] = [a]^{\varphi(r)}\prod_{j=1}^{\varphi(r)} [r_j] \Longleftrightarrow [1] = [a]^{\varphi(r)} ,$$ i.e., $$r \mid (a^{\varphi(r)}-1) .$$

\end{enumerate}

Both of these arguments rely on the existence of a function that counts the units in $\mathbf{Z}/r\mathbf{Z}$ -- the totient function. Also, they rely on the structure of the group of units of the ring $\mathbf{Z}/r\mathbf{Z}$ and the first one also relies on Lagrange's Theorem.

On the other hand, our considerations set up the function $$\prod_{j=1}^N  p_j^{k_j} \mapsto \prod_{j=1}^N \big(p_j^{k_j}-p_j^{k_j-1}\big) ,$$ as the argument is developed. We do not  associate any ulterior meaning with this function. Furthermore, our proof has a simple interpretation related to counting the elements in the cycles with the relevant length (for the prime power case, Theorem \ref{thm:flt+}).

\bigbreak

\noindent
{\bf Acknowledgements}

Lages's work is supported by FCT (Funda\c c\~ao para a Ci\^encia e a Tecnologia), Portugal, through LisMath fellowship PD/BD/150348/2019.

Lopes acknowledges support by FCT (Funda\c c\~ao para a Ci\^encia e a Tecnologia), Portugal, through project FCT PTDC/MAT-PUR/31089/2017, ``Higher Structures and Applications''

Both authors acknowledge support  by FCT/Portugal through CAMGSD, IST-ID,
projects UIDB/04459/2020 and UIDP/04459/2020.

\end{document}